\newcommand{\pt}[1]{{\color{Red} #1}}
\begin{document}
\newtheorem{theorem}{Theorem}[section]
\newtheorem{lemma}[theorem]{Lemma}
\newtheorem{definition}[theorem]{Definition}
\newtheorem{conjecture}[theorem]{Conjecture}
\newtheorem{proposition}[theorem]{Proposition}
\newtheorem{algorithm}[theorem]{Algorithm}
\newtheorem{corollary}[theorem]{Corollary}
\newtheorem{observation}[theorem]{Observation}
\newtheorem{claim}[theorem]{Claim}
\newtheorem{problem}[theorem]{Open Problem}
\newtheorem{remark}[theorem]{Remark}
\newcommand{\noin}{\noindent}
\newcommand{\ind}{\indent}
\newcommand{\om}{\omega}
\newcommand{\I}{\mathcal I}
\newcommand{\N}{{\mathbb N}}
\newcommand{\Z}{{\mathbb Z}}
\newcommand{\LL}{\mathbb{L}}
\newcommand{\R}{{\mathbb R}}
\newcommand{\E}[1]{\mathbb{E}\left[#1 \right]}
\newcommand{\V}{\mathbb Var}
\newcommand{\Prob}{\mathbb{P}}
\newcommand{\eps}{\varepsilon}
\newcommand{\bsigma}{{\boldsymbol\sigma}}
\newcommand{\bm}{{\boldsymbol m}}
\newcommand{\bu}{{\boldsymbol u}}
\newcommand{\bv}{{\boldsymbol v}}
\newcommand{\tU}{{\mathtt U}}
\newcommand{\tD}{{\mathtt D}}
\newcommand{\tL}{{\mathtt L}}
\newcommand{\tR}{{\mathtt R}}

\newcommand{\Tv}{P}

\newcommand{\mT}{\mathcal{T}}
\newcommand{\mS}{\mathcal{S}}
\newcommand{\mA}{\mathcal{A}}
\newcommand{\mB}{\mathcal{B}}

\newcommand{\Cyc}[1]{\mathrm{Cyc}\left(#1\right)}
\newcommand{\Seq}[1]{\mathrm{Seq}\left(#1\right)}
\newcommand{\Mul}[1]{\mathrm{Mul}\left(#1\right)}
\newcommand{\Mulo}[1]{\mathrm{Mul}_{>0}\left(#1\right)}
\newcommand{\Set}[1]{\mathrm{Set}\left(#1\right)}
\newcommand{\Setd}[1]{\mathrm{Set}_{d}\left(#1\right)}
\newcommand{\Bin}{\mathrm{Bin}}

\newcommand{\remove}[1]{}

\newcommand{\circled}[1]{\Large \textcircled{\normalsize #1}}

\title[How many zombies are needed to catch the survivor on toroidal grids?]{How many zombies are needed to catch the survivor on toroidal grids? Embarrassing upper bound of $O(n^2)$.}

\author{Pawe\l{} Pra\l{}at}
\address{Department of Mathematics, Ryerson University, Toronto, ON, Canada}
\email{\tt pralat@ryerson.ca}

\keywords{Zombies and Survivors, Cops and Robbers} \subjclass[2000]{05C57, 05C80}
\thanks{The author gratefully acknowledges support from NSERC}

\begin{abstract}
In Zombies and Survivors, a set of zombies attempts to eat a lone survivor loose on a given graph. The zombies randomly choose their initial location, and during the course of the game, move directly toward the survivor. At each round, they move to the neighbouring vertex that minimizes the distance to the survivor; if there is more than one such vertex, then they choose one uniformly at random. The survivor attempts to escape from the zombies by moving to a neighbouring vertex or staying on his current vertex. The zombies win if eventually one of them eats the survivor by landing on their vertex; otherwise, the survivor wins. The zombie number of a graph is the minimum number of zombies needed to play such that the probability that they win is at least 1/2. 

This variant of the game was recently investigated for several graph families, such as cycles, hypercubes, incidence graphs of projective planes, and grids $P_n \square P_n$. However, unfortunately, still very little is known for toroidal grids $C_n \square C_n$: the zombie number of $C_n \square C_n$ is at least $\sqrt n/(\omega\log n)$, where $\omega = \omega(n)$ is any function going to infinity as $n \to \infty$, and no upper bound is known except a trivial bound of $O(n^2 \log n)$. In this note, we provide an approach that gives an embarrassing bound of $O(n^2)$ but it is possible that (with more careful, deterministic, argument) it might actually give a bound of $O(n^{3/2})$. On the other hand, by analyzing a specific strategy for the survivor, it seems that one could slightly improve the lower bound to $\sqrt n/\omega$. In any case, we are far away from understanding this intriguing question. \pt{Your help is needed!}
\end{abstract}

\maketitle

\section{Introduction}

For a given connected graph $G$ and given $k \in \N$, we consider the following probabilistic variant of Cops and Robbers, which is played over a series of discrete time-steps. In the game of \emph{Zombies and Survivors}, suppose that $k$ \emph{zombies} (akin to the cops) start the game on random vertices of $G$; each zombie, independently, selects a vertex uniformly at random to start with. Then the \emph{survivor} (akin to the robber) occupies some vertex of $G$. As zombies have limited intelligence, in each round, a given zombie moves towards the survivor along a shortest path connecting them. In particular, the zombie decreases the distance from its vertex to the survivor's. If there is more than one neighbour of a given zombie that is closer to the survivor than the zombie is, then they move to one of these chosen uniformly at random. Each zombie moves independently of all other zombies. As in Cops and Robbers, the survivor may move to another neighbouring vertex, or \emph{pass} and not move. The zombies win if one or more of them \emph{eat} the survivor; that is, land on the vertex which the survivor currently occupies. The survivor, as survivors should do in the event of a zombie attack, attempts to survive by applying an optimal strategy; that is, a strategy that minimizes the probability of being captured. Note that there is no strategy for the zombies; they merely move on geodesics towards the survivor in each round. Note that since zombies always move toward the survivor, he can pass at most $D$ times, where $D$ is a diameter of $G$, before being eaten by some zombie. We note also that our probabilistic version of Zombies and Survivors was inspired by a deterministic version of this game (with similar rules, but the zombies may choose their initial positions, and also choose which shortest path to the survivor they will move on) first considered in~\cite{hm}.

\bigskip

Let $s_k(G)$ be the probability that the survivor wins the game, provided that he follows the optimal strategy. Clearly, $s_k(G)=1$ for $k < c(G)$, where $c(G)$ is the cop number of $G$. On the other hand, $s_k(G) < 1$ provided that there is a strategy for $k \ge c(G)$ cops in which the cops always try to get closer to the robber, since with positive probability the zombies may follow such a strategy. Usually, $s_k(G) > 0$ for any $k \ge c(G)$; however, there are some examples of graphs for which $s_k(G) = 0$ for every $k \ge c(G)$ (consider, for example, trees). Further, note that $s_k(G)$ is a non-decreasing function of $k$ (that is, for every $k \ge 1$, $s_{k+1}(G) \le s_k(G)$), and $s_k(G) \to 0$ as $k\to \infty$. The latter limit follows since the probability that each vertex is initially occupied by at least one zombie tends to 1 as $k \to \infty$.

Define the \emph{zombie number} of a graph $G$ by
$$
z(G) = \min \{ k \ge c(G) : s_k(G) \le 1/2 \} .
$$
This parameter is well defined since $\lim_{k\to\infty}s_k(G)=0$. In other words, $z(G)$ is the minimum number of zombies such that the probability that they eat the survivor is at least 1/2. The ratio $Z(G) = z(G)/c(G) \ge 1$ is the \emph{cost of being undead}.  Note that there are examples of families of graphs for which there is no cost of being undead; that is, $Z(G)=1$ (as is the case if $G$ is a tree), and, as argued in~\cite{zombies}, there are examples of graphs with $Z(G)=\Theta(n).$

\bigskip

This variant of the game was recently introduced in~\cite{zombies}, where several graph families were investigated, such as cycles, hypercubes, incidence graphs of projective planes, and Cartesian grids. In particular, the zombie number of the incidence graphs of projective planes is about two times larger than the corresponding cop number; for hypercubes, this ratio is asymptotically $4/3$. 

It seems that the main question for this game is to investigate the zombie number for grids formed by products of cycles, $T_n = C_n \square C_n$ (so called \emph{toroidal grids}). In~\cite{zombies}, it was proved that the zombie number of $T_n$ is at least $\sqrt n/(\omega\log n)$, where $\omega = \omega(n)$ is any function going to infinity as $n \to \infty$. The proof relies on the careful analysis of a strategy for the survivor. On the other hand, trivially, $z(T_n) = O(n^2 \log n)$. Suppose that the game is played against $k = 3 n^2 \log n$ zombies. It is easy to see that a.a.s.\ every vertex is initially occupied by at least one zombie and if so the survivor is eaten immediately. Indeed, the probability that at at least one vertex is not not occupied by a zombie is at most $n^2 (1-1/n^2)^k \le n^2 \exp(-k/n^2) = 1/n = o(1)$. 

In this paper, we provide a general approach that gives immediately a tiny improvement, an upper bound of $O(n^2)$. We investigate this technique by analyzing a few natural strategies and show that this approach has no hope of giving an upper bound better than $O(n^{3/2})$. However, it is quite possible that (with more careful, deterministic, argument) it might actually give that bound. It is left for a further investigation. On the other hand, by analyzing a specific strategy for the survivor, it seems that one could slightly improve the lower bound to $\sqrt n/\omega$. In any case, we are far away from understanding this intriguing question. \pt{Your help is needed!}

\section{Results}

\subsection{Definitions} 

Let us start with a formal definition of the class of graphs we investigate in this paper.
% For a reference on graph theory the reader is directed to \cite{west}. 
For graphs $G$ and $H$, define the \emph{Cartesian product} of $G$ and $H$, written $G\square H,$ to have vertices $V(G)\times V(H),$ and vertices $(a,b)$ and $(c,d)$ are joined if $a=c$ and $bd \in E(H)$ or $ac \in E(G)$ and $b=d.$ In this note, we consider grids formed by products of cycles.  Let $T_n$ be the \emph{toroidal grid} $n\times n$, which is isomorphic to $C_n \square C_n.$ For simplicity, we take the vertex set of $T_n$ to consist of $\Z_n \times\Z_n$, where $\Z_n$ denotes the ring of integers modulo $n$.

Results in the paper are asymptotic in nature as $n \rightarrow \infty$. We emphasize that the notations $o(\cdot)$ and $O(\cdot)$ refer to functions of $n$, not necessarily positive, whose growth is bounded. We say that an event in a probability space holds \emph{asymptotically almost surely} (or \emph{a.a.s.}) if the probability that it holds tends to $1$ as $n$ goes to infinity. Finally, for simplicity we will write $f(n) \sim g(n)$ if $f(n)/g(n) \to 1$ as $n \to \infty$; that is, when $f(n) = (1+o(1)) g(n)$.

\subsection{General upper bound} 

We are going to play the game on the toroidal grid $T_n$. Let us consider the family $\mathcal{F}$ of strategies of the survivor for the first $M=M(n):=\lfloor n/4 \rfloor$ moves of the game. Here, a \emph{strategy} is simply a sequence of moves of the survivor which is not affected neither by initial distribution of zombies nor by their behaviour during the game. Moreover, we may assume that zombies do not eat the survivor immediately when they catch him; instead, they walk with him to the end of this sequence of $T$ moves and then do their job. Hence, since there are $n^2$ vertices to choose from for the starting position of the survivor and 5 options in each round (``go west'', ``go east'', ``go north'', ``go south'', and ``stay put''), the number of strategies in $\mathcal{F}$ is $n^2 5^{M} = n^2 5^{\lfloor n/4 \rfloor}$. Finally, let $\mathcal{F}_0 \subseteq \mathcal{F}$ be a subfamily of $5^{\lfloor n/4 \rfloor}$ strategies of the survivor that finish his walk at vertex $(0,0)$ of $T_n$.

Let us concentrate on a given strategy $\mathcal{S} \in \mathcal{F}$ finishing at vertex $(a,b)$ of $T_n$. For any $x,y \in \Z$ such that $|x| + |y| \le M$, let $p_{\mathcal{S}}(x,y)$ be the probability that a zombie starting at vertex $(a+x,b+y)$ eats the survivor using strategy $\mathcal{S}$ for the first $M$ moves. Clearly, if a zombie starts at vertex at distance larger than $M$ from $(a,b)$, then it is impossible for her to catch the survivor (even intelligent player would not be able to reach $(a,b)$ in $M$ moves!); hence, in this situation $p_{\mathcal{S}}(x,y) = 0$. Note also that restricting staring positions for zombies to the subgraph around $(a,b)$ guarantees that the survivor and zombies starting at this subgraph do not leave it during the fist $M$ rounds. As a result, the game during these $M$ rounds is played as if it was played on the square grid ($P_n \square P_n$) centred at $(a,b)$, not the toroidal one. Moreover, since $T_n$ is a vertex transitive graph, without loss of generality, we may assume that the survivor finishes his walk at vertex $(0,0)$. These little observations will simplify the analysis below. Finally, let 
$$
t(\mathcal{S}) = \sum_{ x = - M}^{M}  \sum_{ y = - M}^{M} p_\mathcal{S}(x,y).
$$
As mentioned earlier, due to the fact that $T_n$ is vertex transitive, we have
$$
t_n := \min_{\mathcal{S} \in \mathcal{F}} t(\mathcal{S}) = \min_{\mathcal{S} \in \mathcal{F}_0} t(\mathcal{S}).
$$

Now, we are ready to state our first observation.

\begin{theorem}\label{thm:gen_bound}
A.a.s.\ $k = n^3 / t_n$ zombies catch the survivor on $T_n$. Hence, $z(T_n) = O(n^3 / t_n)$.
\end{theorem}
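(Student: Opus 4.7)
The plan is to apply a union bound over the finite family $\mathcal{F}$ of non-adaptive survivor strategies for the first $M$ rounds, showing that with $k = n^3/t_n$ iid-placed zombies, a.a.s.\ no strategy in $\mathcal{F}$ lets the survivor escape. The first---and most delicate---step is a reduction from the (possibly adaptive) optimal survivor to the class $\mathcal{F}$. I would couple all randomness up front: fix the zombies' initial positions and their internal tie-breaking coins. For any adaptive survivor strategy $\sigma$, the realised sequence of moves $\mathcal{S}^\ast = \mathcal{S}^\ast(\sigma,\omega)$ is some random element of $\mathcal{F}$, and on the event $\{\mathcal{S}^\ast = \mathcal{S}\}$ the zombies' trajectories coincide with those they would produce against a survivor pre-committed to $\mathcal{S}$, because zombies observe only the survivor's current position and the ``walk with him'' convention of the setup makes their moves a deterministic function of the survivor's trajectory and their own coins. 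Summing over $\mathcal{S}$ therefore yields
$$\Prob[\sigma \text{ wins}] \;\le\; \sum_{\mathcal{S} \in \mathcal{F}} \Prob[\text{no zombie catches the survivor along }\mathcal{S}].$$

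Next I would bound each summand. For a fixed $\mathcal{S} \in \mathcal{F}$ ending at $(a,b)$, a single zombie placed uniformly catches the survivor with probability exactly $t(\mathcal{S})/n^2 \ge t_n/n^2$: only starting positions within $\ell_1$-distance $M$ of $(a,b)$ contribute anything, and by definition of $p_\mathcal{S}$ those contributions sum to $t(\mathcal{S})$. Since the $k$ zombies are placed independently and each uses its own tie-breaking coins, the $k$ catch events are mutually independent, so the probability that none catches is at most $(1 - t_n/n^2)^k \le \exp(-k t_n/n^2)$.

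Plugging in $|\mathcal{F}| = n^2 \cdot 5^{\lfloor n/4 \rfloor}$ and $k = n^3/t_n$ makes the exponent $-n$, and the union bound gives
$$\Prob[\text{survivor wins}] \;\le\; n^2 \cdot 5^{\lfloor n/4 \rfloor} \cdot e^{-n} \;=\; \exp\!\Big(2\log n + \tfrac{n}{4}\log 5 - n\Big) \;=\; o(1),$$
since $\tfrac14 \log 5 \approx 0.402 < 1$. This yields the a.a.s.\ claim and hence $z(T_n) = O(n^3/t_n)$. The main obstacle is the reduction in the first step: formalising that the optimal adaptive survivor cannot exploit anything beyond what his realised trajectory in $\mathcal{F}$ already allows is the one point that genuinely requires care (the key is that zombies see only positions, not plans). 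Once that reduction is in place, the single-strategy estimate and the comfortable numerical slack---roughly $e^{0.6n}$ between $|\mathcal{F}|$ and $\exp(k t_n/n^2)$---make the remainder routine.
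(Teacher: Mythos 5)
Your proposal is correct and is essentially the paper's own argument: the paper implements your up-front coupling by letting a single shared sequence of fair coins drive all the zombies simultaneously on the real board and on one auxiliary board per strategy in $\mathcal{F}$, then applies the same per-strategy estimate $\left(1 - t_n/n^2\right)^k \le \exp(-n)$ and a first-moment bound over the $n^2 5^{\lfloor n/4 \rfloor}$ auxiliary boards. The only cosmetic differences are that you give each zombie its own tie-breaking coins instead of a single globally consumed sequence, and you phrase the final step as a union bound over realised trajectories rather than as the expected number of auxiliary games the survivor wins; these are equivalent.
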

\begin{proof}
Let $X_1, X_2, \ldots$ be a sequence of independent random variables, each of them being the Bernoulli random variable with parameter $p=1/2$ (that is, $\Pr(X_i = 1) = \Pr(X_i = 0) = 1/2$ for each $i \in \N$). This (random) sequence will completely determine the behaviour of all the zombies. Formally, let us first fix a permutation $\pi$ of $k$ zombies. Then, in each round, we consider all zombies, one by one, using permutation $\pi$. If there is precisely one shortest path between a given zombie and the survivor, the next move is determined; otherwise, the next random variable $X_i$ from the sequence guides it (for example, if $X_i = 0$, the zombie moves horizontally; otherwise, she moves vertically). 

Our goal is to show that a.a.s.\ the survivor is eaten during the fist $M = \lfloor n/4 \rfloor$ rounds, regardless of the strategy he uses. In order to show it, let us pretend that the game is played on a \emph{real board} but, at the same time, there are $n^2 5^M$ \emph{auxiliary boards} where the game is played against all strategies from $\mathcal{F}$. An important assumption is that the same sequence $X_1, X_2, \ldots$ is used for all the boards. 

Since zombies select initial vertices uniformly at random, the probability that a given zombie wins against a given strategy $\mathcal{S} \in \mathcal{F}$ is at least
$$
\sum_{ x = -M}^{M}  \sum_{ y = - M}^{M} \frac {p_\mathcal{S}(x,y)}{n^2} \ge \frac {t_n}{n^2}.
$$
Since zombies play independently, the probability the survivor using strategy $\mathcal{S}$ is not eaten during the first $M$ rounds is at most
$$
\left( 1 - \frac {t_n}{n^2} \right)^k \le \exp \left( - \frac {t_n k}{n^2} \right) = \exp( - n) = o \left (\frac {1}{n^2 5^{n/4}} \right).
$$
(Note that, in particular, if two zombies start at the same vertex $(x,y)$, each of them catches the survivor with probability $p_{\mathcal{S}}(x,y)$ and the corresponding events are independent.) It follows that the expected number of auxiliary games where the survivor is not eaten is $o(1)$ and so a.a.s.\ he loses on all auxiliary boards. As a result, a.a.s.\ zombies win the real game too, regardless of the strategy used by the survivor. Clearly, the survivor should make decisions based on the behaviour of the zombies (who make decisions at random and so very often behave in a sub-optimal way); however, if the survivor wins the real game, at least one auxiliary game is also won by some specific strategy which we showed cannot happen a.a.s. That is why we needed the trick with the sequence $X_1, X_2, \ldots$ guiding all the games considered.
\end{proof}

\subsection{Deriving $p_{\mathcal{S}}(x,y)$} 

In this subsection, we first derive a recursive formula for calculating $p_{\mathcal{S}}(x,y)$ for a given strategy $\mathcal{S} \in \mathcal{F}$. Consider any strategy $\mathcal{S} \in \mathcal{F}$: at the beginning of round $i$ ($1 \le i \le M = \lfloor n/4 \rfloor$), the survivor occupies vertex $(x_i, y_i)$, zombies make their moves and then the survivor moves to $(x_{i+1}, y_{i+1})$; round $i$ is finished and round $i+1$ starts. As before, since $T_n$ is vertex transitive, without loss of generality we may restrict ourselves to a subfamily $\mathcal{F}_0$ of strategies for the first $M$ moves that bring the survivor to vertex $(0,0)$ after $M$ rounds; that is, $(x_M,y_M)=(0,0)$. We are interested in calculating $p_{\mathcal{S}}^i(x,y)$, the probability that a zombie occupying $(x,y)$ at the beginning of round $i$ catches the survivor using strategy $\mathcal{S}$ by the end of round $M$. Note that $p_{\mathcal{S}}(x,y) = p_{\mathcal{S}}^{1}(x,y)$. Clearly,
$$
p_{\mathcal{S}}^M(0,0) = p_{\mathcal{S}}^M(1,0) = p_{\mathcal{S}}^M(-1,0) = p_{\mathcal{S}}^M(0,1) = p_{\mathcal{S}}^M(0,-1) = 1,
$$
and $p_{\mathcal{S}}^M(x,y) = 0$ if $|x|+|y| > 1$ (zombie must be at distance at most 1 from the survivor at the beginning of round $M$). Moreover, for any round $1 \le i < M$ we have the following recursive formula. If $x \neq x_i$ and $y \neq y_i$, then
$$
p_{\mathcal{S}}^{i}(x,y) = \frac {p_{\mathcal{S}}^{i+1}(x \pm 1,y)}{2} + \frac {p_{\mathcal{S}}^{i+1}(x,y \pm 1)}{2},
$$
where ``$\pm$'' is ``$+$'' or ``$-$'' so that $|x_i - (x \pm 1)| < |x_i-x|$ and $|y_i - (y \pm 1)| < |y_i-y|$ (zombies move towards the survivor, either horizontally or diagonally; the decision is made uniformly at random). If $x = x_i$ and $y \neq y_i$, then
$$
p_{\mathcal{S}}^{i}(x,y) = p_{\mathcal{S}}^{i+1}(x,y \pm 1)
$$
(zombies move diagonally towards the survivor). Finally, if $x \neq x_i$ and $y = y_i$, then
$$
p_{\mathcal{S}}^{i}(x,y) = p_{\mathcal{S}}^{i+1}(x \pm 1,y)
$$
(zombies move horizontally towards the survivor). 

\begin{figure}
\begin{center}
\includegraphics[width=0.5\textwidth]{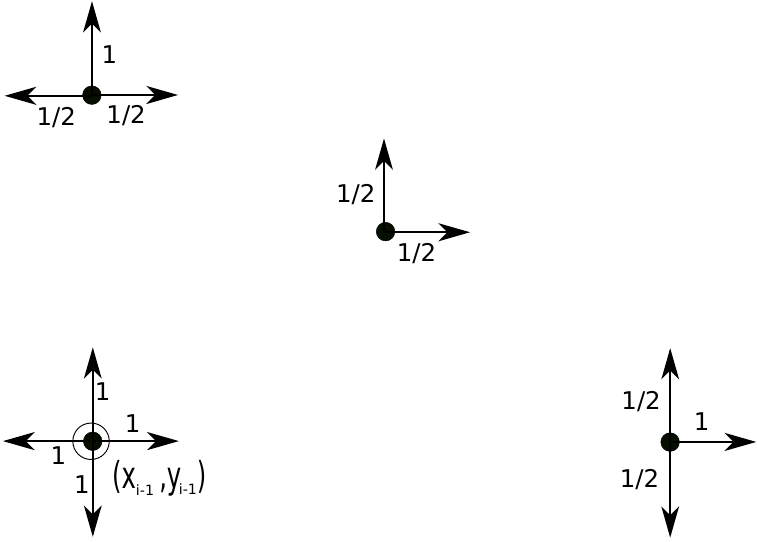} 
\end{center}
\caption{Getting $p_{\mathcal{S}}^{i-1}(x,y)$ from $p_{\mathcal{S}}^{i}(x,y)$.}
\label{fig:spreading}
\end{figure}

Reversing the direction, we get the following recursive relationship between $p_{\mathcal{S}}^{i}(x,y)$ and $p_{\mathcal{S}}^{i-1}(x,y)$. (See Figure~\ref{fig:spreading} for an illustration.) If $x \neq x_{i-1}$ and $y \neq y_{i-1}$, then half of the weight of $p_{\mathcal{S}}^{i}(x,y)$ is moved horizontally and half of it is moved vertically to the two neighbours of $(x,y)$ that are further away from $(x_{i-1},y_{i-1})$. If $x \neq x_{i-1}$ but $y = y_{i-1}$, then half of the weight of $p_{\mathcal{S}}^{i}(x,y)$ is moved vertically to both ``vertical'' neighbours of $(x,y)$; in addition to this, $p_{\mathcal{S}}^{i}(x,y)$ is added to the ``horizontal'' neighbour of $(x,y)$ that is further away from $(x_{i-1},y_{i-1})$. We proceed in a symmetric way if $x = x_{i-1}$ and $y \neq y_{i-1}$. Finally, we put
\begin{eqnarray*}
p_{\mathcal{S}}^{i-1}(x_{i-1},y_{i-1}) &=& p_{\mathcal{S}}^i(x_{i-1}+1,y_{i-1}) = p_{\mathcal{S}}^i(x_{i-1}-1,y_{i-1}) \\
&=& p_{\mathcal{S}}^i(x_{i-1},y_{i-1}+1) = p_{\mathcal{S}}^i(x_{i-1},y_{i-1}-1) = 1.
\end{eqnarray*}

\begin{table}
\begin{center}
{\tiny
\begin{tabular}{ccccccc}
&&&&&&\\
&&&&&&\\
&&&1.00&&&\\
&&1.00&{\color{Red}1.00}&1.00&&\\
&&&1.00&&&\\
&&&&&&\\
&&&&&&\\
\end{tabular}
$\to$
\begin{tabular}{ccccccc}
&&&&&&\\
&&&1.00&&&\\
&&1.00&1.00&1.00&&\\
&1.00&1.00&{\color{Red}1.00}&1.00&1.00&\\
&&1.00&1.00&1.00&&\\
&&&1.00&&&\\
&&&&&&\\
\end{tabular}
$\to$
\begin{tabular}{ccccccc}
&&&1.00&&&\\
&&1.00&1.00&1.00&&\\
&1.00&1.00&1.00&1.00&1.00&\\
1.00&1.00&1.00&1.00&1.00&1.00&1.00\\
&1.00&1.00&1.00&1.00&1.00&\\
&&1.00&1.00&1.00&&\\
&&&1.00&&&\\
\end{tabular}
}
\caption{The survivor does not move: $p_{\mathcal{S}}^M \to p_{\mathcal{S}}^{M-1} \to p_{\mathcal{S}}^{M-2}$}
\label{tab:not_moving}
\end{center}
\end{table}

\begin{table}
\begin{center}
{\tiny
\begin{tabular}{ccccccc}
&&&&&&\\
&&&&&&\\
&&&1.00&&&\\
&&1.00&1.00&1.00&&\\
&&&{\color{Red}1.00}&&&\\
&&&&&&\\
&&&&&&\\
\end{tabular}
$\to$
\begin{tabular}{ccccccc}
&&&&&&\\
&&&1.00&&&\\
&&1.00&1.00&1.00&&\\
&0.50&0.50&1.00&0.50&0.50&\\
&&1.00&1.00&1.00&&\\
&&&{\color{Red}1.00}&&&\\
&&&&&&\\
\end{tabular}
$\to$
\begin{tabular}{ccccccc}
&&&1.00&&&\\
&&1.00&1.00&1.00&&\\
&0.75&0.75&1.00&0.75&0.75&\\
0.25&0.25&1.00&1.00&1.00&0.25&0.25\\
&0.50&0.50&1.00&0.50&0.50&\\
&&1.00&1.00&1.00&&\\
&&&1.00&&&\\
\end{tabular}
}
\caption{The survivor goes straight up: $p_{\mathcal{S}}^M \to p_{\mathcal{S}}^{M-1} \to p_{\mathcal{S}}^{M-2}$}
\label{tab:go_straight}
\end{center}
\end{table}

Table~\ref{tab:not_moving} presents the first two iterations for the strategy where the survivor does not move at all. Of course, for this simple strategy there is no need to use recursive formula. Clearly, $p_{\mathcal{S}}^1(x,y) = 1$ if the distance from $(x,y)$ to $(0,0)$ is at most $M$ (that is, $|x|+|y| \le M$) and $p_{\mathcal{S}}^1(x,y) = 0$ otherwise. The next example, presented in Table~\ref{tab:go_straight} is more interesting; this time the survivor goes straight up. In both cases, vertex $(x_{i-1}, y_{i-1})$ that is used to get $p_{\mathcal{S}}^{i-1}$ from $p_{\mathcal{S}}^{i}$ is coloured red. 

\subsection{Upper bound: $z(T_n) = O(n^2)$}

Since not only all the weight contributing to $\sum_{x,y} p_{\mathcal{S}}^i(x,y)$ is preserved in $\sum_{x,y} p_{\mathcal{S}}^{i-1}(x,y)$ but each time the total weight increases by at least 4, it trivially follows that $t_n \ge 4M = 4 \lfloor n/4 \rfloor \sim n$. As a result, from Theorem~\ref{thm:gen_bound} we get the following upper bound for $z(T_n)$. 

\begin{corollary}
$z(T_n) = O(n^2)$.
\end{corollary}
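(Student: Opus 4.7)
The plan is to invoke Theorem~\ref{thm:gen_bound}, so the entire task reduces to proving the linear lower bound $t_n = \Omega(n)$. Fix an arbitrary strategy $\mathcal{S} \in \mathcal{F}_0$ and set $W_i := \sum_{x, y} p_{\mathcal{S}}^i(x, y)$; then $t(\mathcal{S}) = W_1$, so it suffices to show $W_1 \ge (1-o(1)) n$. Starting from the base case $W_M = 5$ (the survivor's endpoint at round $M$ together with its four neighbours each contribute $1$), I would prove that the backward recursion satisfies $W_{i-1} \ge W_i + 4$ for every $i \in \{2, \ldots, M\}$. Telescoping then yields $W_1 \ge 4M + 1 = 4 \lfloor n/4 \rfloor + 1 \sim n$, as required.

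The main step is the per-round increment $W_{i-1} - W_i \ge 4$, and the approach is a careful mass-accounting under the backward rules summarized in Figure~\ref{fig:spreading}. Two observations are key. First, the override sets $p_{\mathcal{S}}^{i-1}$ to $1$ on the five vertices $(x_{i-1}, y_{i-1})$ and its four neighbours, contributing exactly $5$ to $W_{i-1}$; these five positions never receive any mass from the spreading, because the rules always push mass to a neighbour strictly \emph{further} from $(x_{i-1}, y_{i-1})$. Second, for each remaining position I would compute the total outgoing weight carried by $p_{\mathcal{S}}^i(x, y)$: it equals $1$ in the generic case $x \neq x_{i-1}$, $y \neq y_{i-1}$ (mass is split $\tfrac12$--$\tfrac12$ between the two further neighbours, hence preserved); it equals $2$ for positions lying on either axis through $(x_{i-1}, y_{i-1})$ but distinct from the centre (the full value is copied to the further axial neighbour while $\tfrac12$ is added to each of the two perpendicular neighbours); and it equals $0$ at the centre itself. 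Writing $A$ for the set of axis positions excluding the centre, this accounting gives
\[
W_{i-1} \;=\; 5 \;+\; W_i \;-\; p_{\mathcal{S}}^i(x_{i-1}, y_{i-1}) \;+\; \sum_{(x,y) \in A} p_{\mathcal{S}}^i(x, y) \;\ge\; W_i + 5 - 1 + 0 \;=\; W_i + 4,
\]
using $p_{\mathcal{S}}^i \le 1$ pointwise and nonnegativity of the axis sum.

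Plugging $t_n \ge (1 + o(1)) n$ into Theorem~\ref{thm:gen_bound} then yields $z(T_n) = O(n^3 / t_n) = O(n^2)$. I expect the only mildly delicate point to be the outgoing-weight tally in the axis case (and the verification that the spreading never overshoots into the overridden five-vertex cross); elsewhere the recursion is transparently mass-preserving, so once the bookkeeping is settled the estimate is, as remarked in the paragraph preceding the statement, essentially automatic.
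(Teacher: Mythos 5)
Your proposal is correct and follows essentially the same route as the paper: the paper's proof is exactly the observation that the backward spreading preserves total weight and adds at least $4$ per round (giving $t_n \ge 4M \sim n$), which it then plugs into Theorem~\ref{thm:gen_bound}. Your explicit accounting identity $W_{i-1} = 5 + W_i - p_{\mathcal{S}}^i(x_{i-1},y_{i-1}) + \sum_{(x,y)\in A} p_{\mathcal{S}}^i(x,y)$ is a valid, more detailed justification of that one-line claim (it even checks out against the paper's Tables~\ref{tab:not_moving} and~\ref{tab:go_straight}), so no gap remains.
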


This improves the trivial upper bound of $O(n^2 \log n)$ but it is embarrassing that this is the best we can do. 
Is there an upper bound of $n^{2 - \eps}$ for some $\eps>0$? This remains an open question. 

\subsection{Examples of 8 strategies} 

\begin{figure}
\begin{center}
\begin{tabular}{cc}
\includegraphics[width=0.4\textwidth]{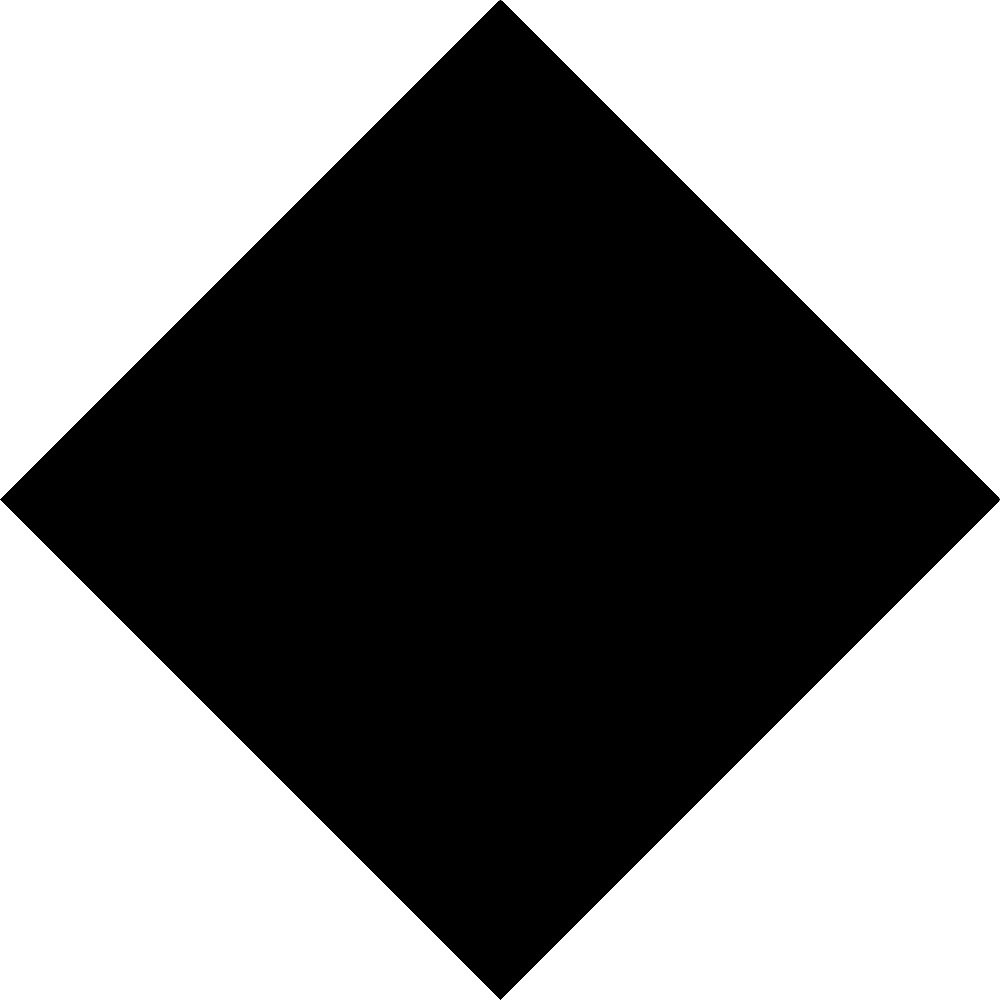} & \includegraphics[width=0.4\textwidth]{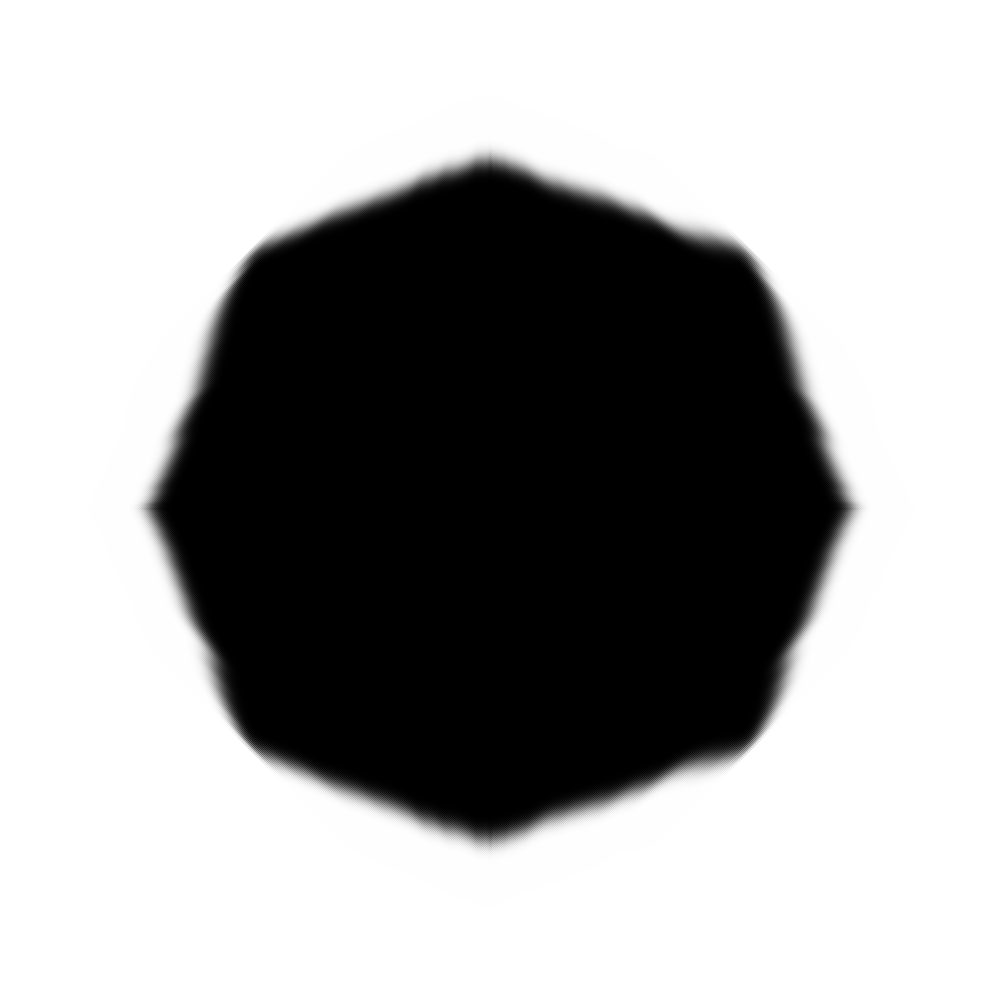} \\ 
(a) stay put & (b) move randomly \\
\includegraphics[width=0.4\textwidth]{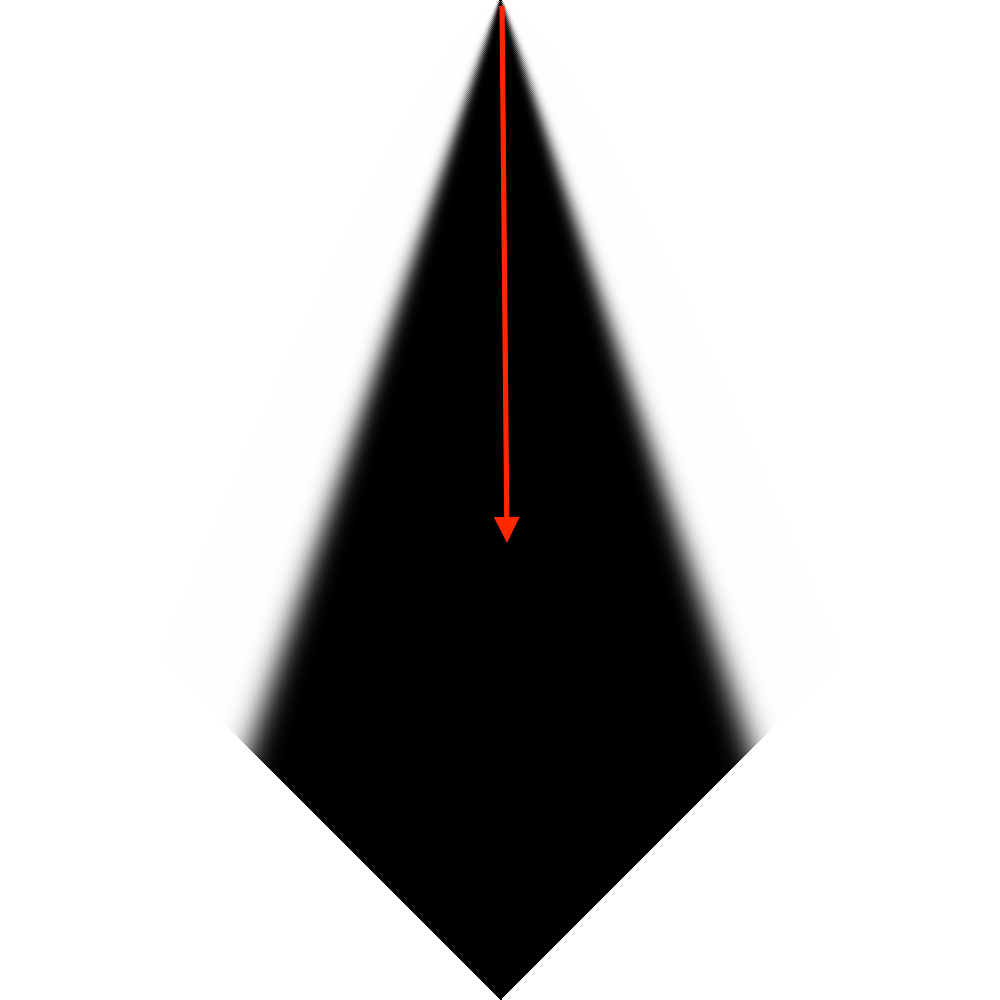} & \includegraphics[width=0.4\textwidth]{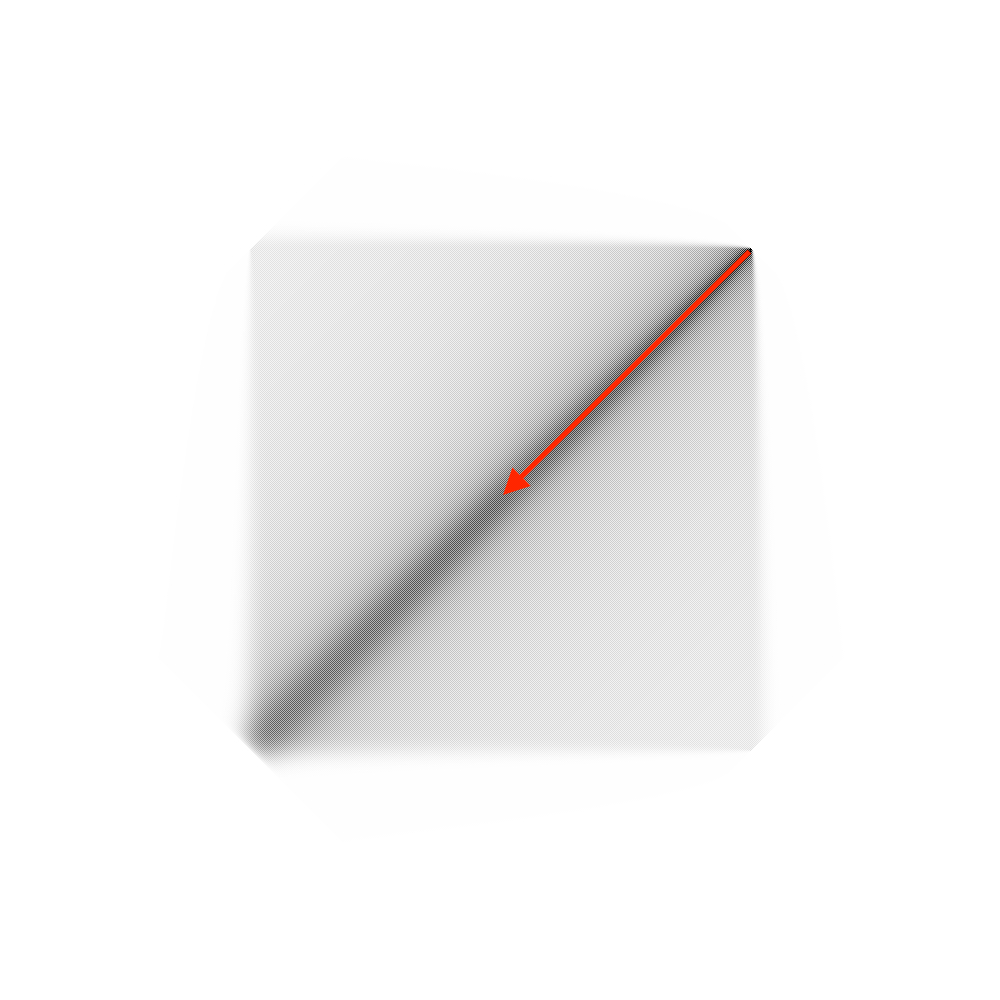} \\ 
(c) go down & (d) go diagonally \\
\end{tabular}
\end{center}
\caption{Examples of 4 strategies for the first $M$ moves}
\label{fig:4strategies}
\end{figure}

\begin{figure}
\begin{center}
\begin{tabular}{cc}
\includegraphics[width=0.4\textwidth]{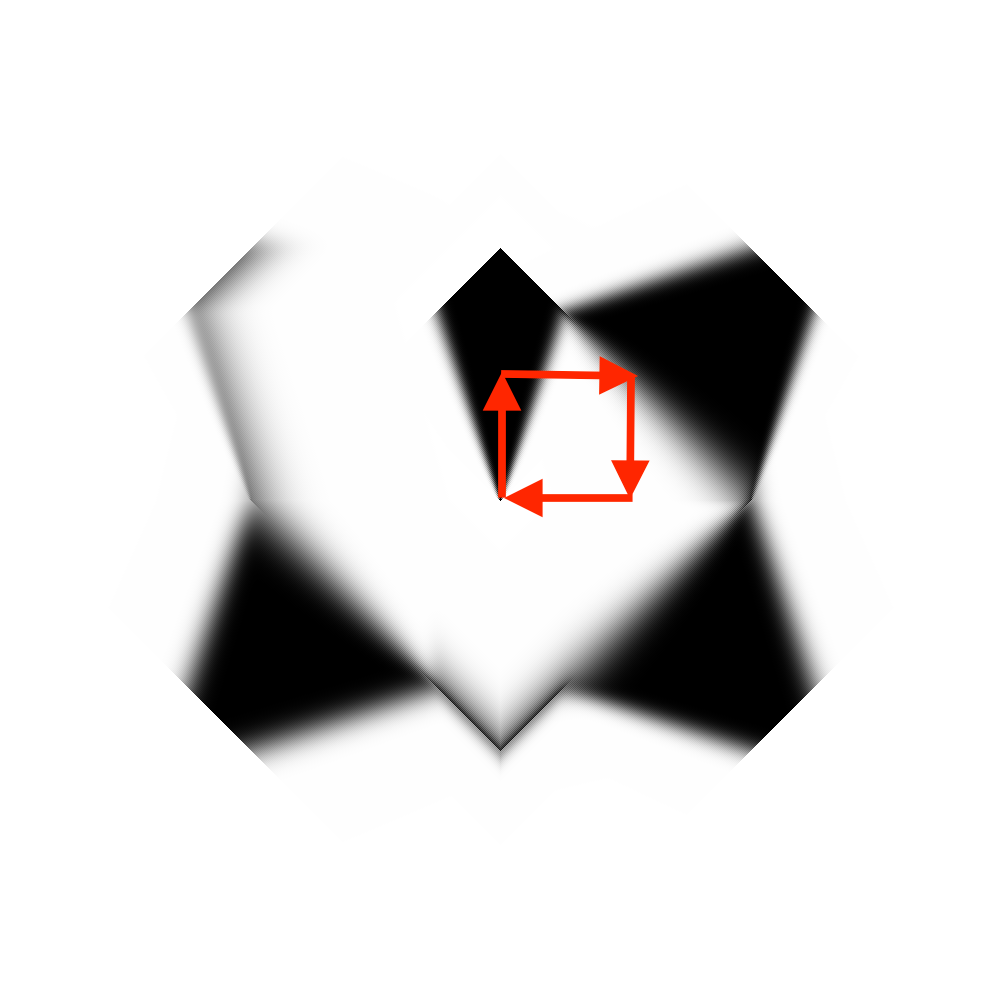} & \includegraphics[width=0.4\textwidth]{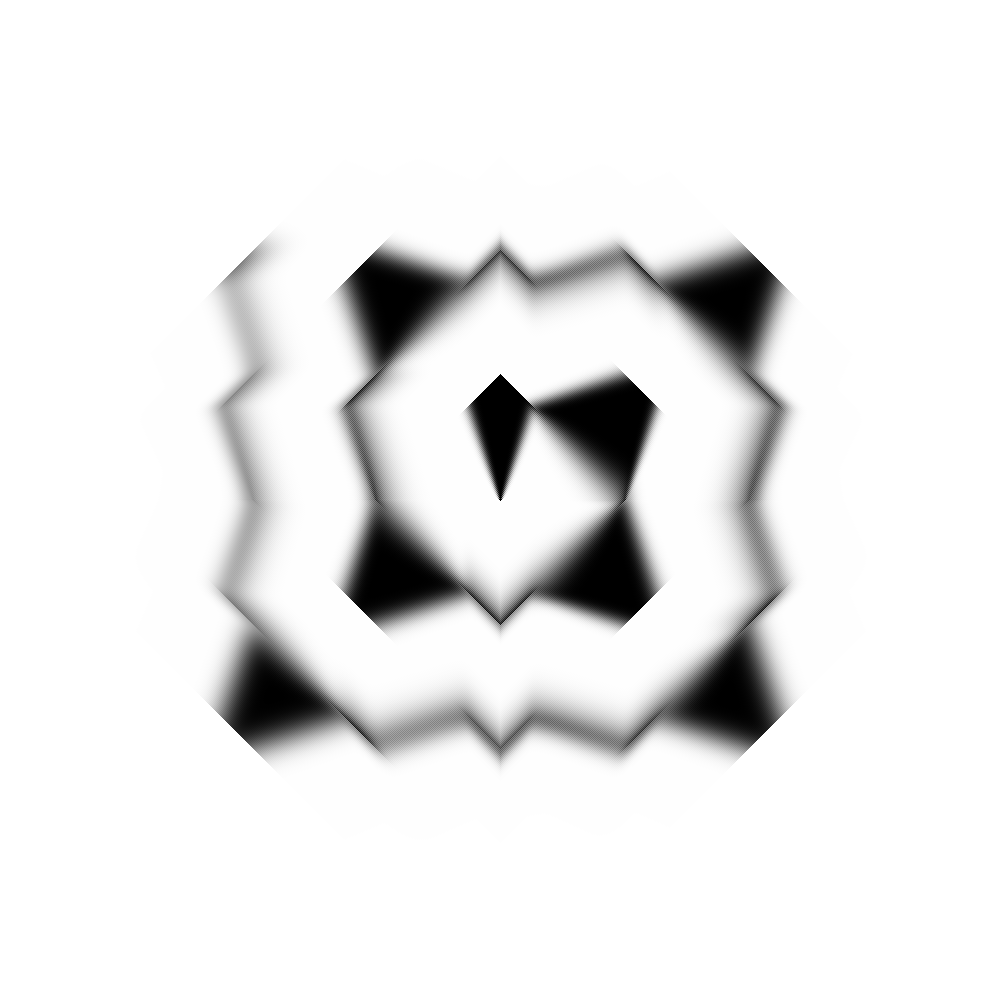} \\ 
(e) one square & (f) two squares \\
\includegraphics[width=0.4\textwidth]{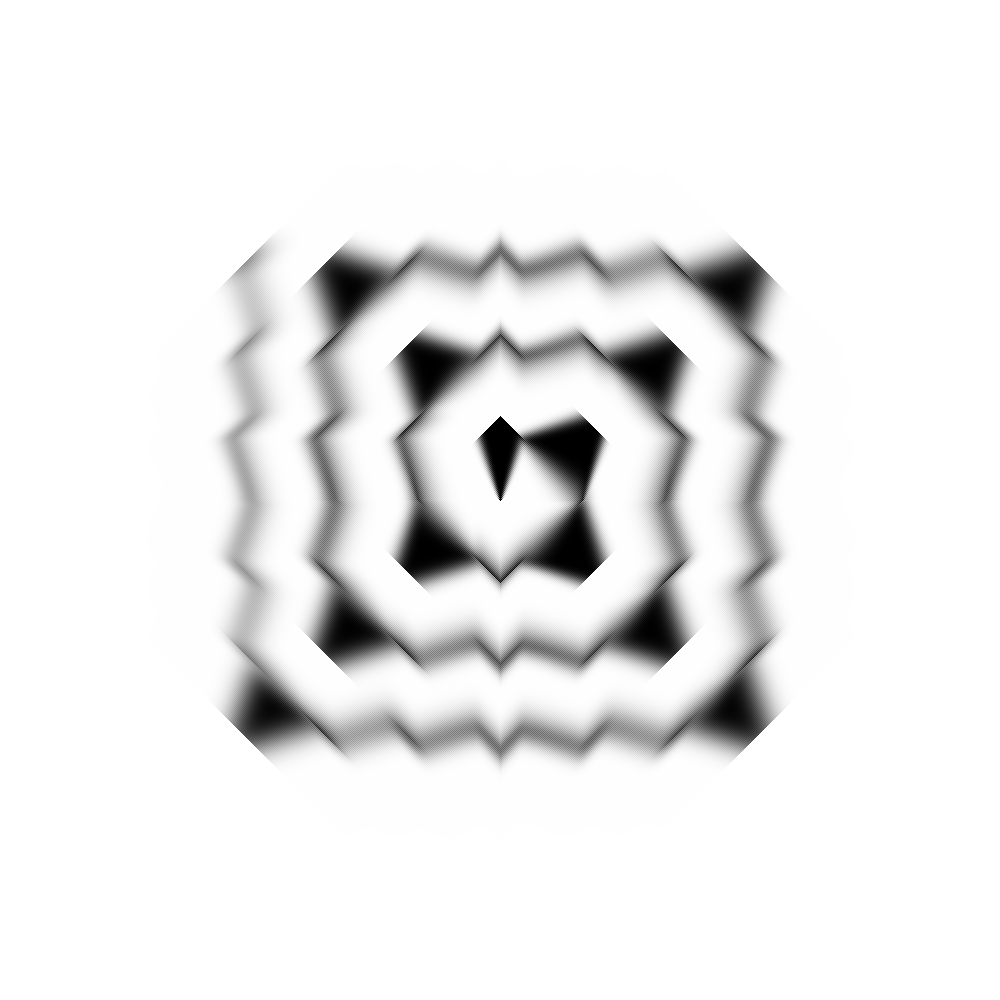} & \includegraphics[width=0.4\textwidth]{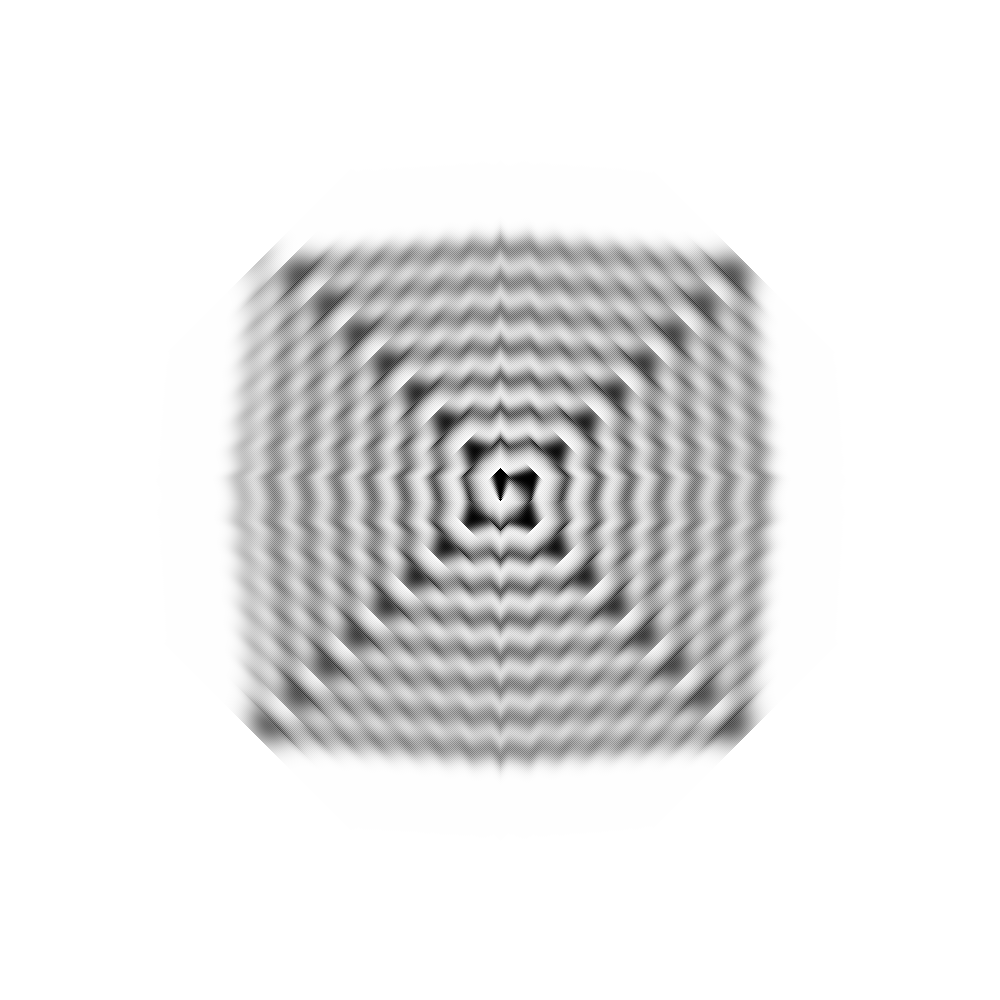} \\ 
(g) three squares & (h) eight squares \\
\end{tabular}
\end{center}
\caption{Examples of 4 more strategies for the first $M$ moves}
\label{fig:4more}
\end{figure}

In this subsection, we present a few natural strategies from family $\mathcal{F}_0$: the survivor $\ldots$
\begin{itemize}
\item [(a)] $\ldots$does not move. 
\item [(b)] $\ldots$performs a random walk.
\item [(c)] $\ldots$goes straight down.
\item [(d)] $\ldots$moves along the diagonal (that is, moves down and then immediately left in each pair of the two consecutive rounds).
\item [(e--h)] $\ldots$goes along edges of a square ($k$ times): $k=1, 2, 3$ or $8$. 
\end{itemize}
For each of them, we performed simulations with various values of $n$ to estimate $t(\mathcal{S})$. As discussed earlier, $p_{\mathcal{S}}(x,y) = 0$ if $|x|+|y| > M= \lfloor n/4 \rfloor$ so it makes sense to use the following scaling: $t(\mathcal{S}) / (n^2/8)$ (note that the number of vertices at distance at most $M$ is asymptotic to $4(M^2/2) \sim n^2 / 8$). In particular, for the worst strategy (from the perspective of the survivor), strategy (a), we get $t(\mathcal{S}) / (n^2/8) \sim 1$. However, for strategies (d) and (h), it seems that different scaling is appropriate, namely, $t(\mathcal{S})/n^{3/2}$. Strategy (d) is analyzed in one of the following subsections and it will become clear why this scaling is more appropriate. We present our results in Table~\ref{tab:simulations}. Finally, functions $p_{\mathcal{S}}(x,y)$ are presented visually on Figures~\ref{fig:4strategies} and~\ref{fig:4more} (dark colours correspond to values of probabilities that are close to 1, light ones to values close to 0).

\begin{table}
\begin{center}
\begin{tabular}{ccccccccc}
$n$ & (a) & (b) & (c) & (d) & (d)' & (e) & (h) & (h)' \\
\hline
1,000 & 1 & 0.699986 & 0.506009 & 0.124296 & 0.491322 & 0.244716 & 0.281038 & 1.110899 \\
2,000 & 1 & 0.694902 & 0.503004 & 0.089770 & 0.501829 & 0.236672 & 0.172459 & 0.964076 \\
4,000 & 1 & 0.692208 & 0.501504 & 0.064498 & 0.509900 & 0.229853 & 0.122445 & 0.968013 \\
8,000 & 1 & 0.690806 & 0.500758 & 0.046374 & 0.518481 & 0.225787 & 0.089200 & 0.997286 \\
\hline
\end{tabular}
\end{center}
\caption{$t(\mathcal{S}) / (n^2/8)$ and $t(\mathcal{S}) / n^{3/2}$ ((d)' and (h)') for various strategies}
\label{tab:simulations}
\end{table}

\subsection{Reduction to one dimensional problem} 

For a while, our conjecture was that $t_n = \Omega(n^2)$ and, as a result, $z(T_n) = O(n)$. In order to simplify the analysis, we may project $p_{\mathcal{S}}(x,y)$ onto $x$; that is, we may concentrate on
$$
q_{\mathcal{S}}^i(x) = \sum_{y = -M}^M p^i_{\mathcal{S}}(x,y).
$$
Clearly, no recursive formula for getting $q_{\mathcal{S}}^{i-1}(x)$ from $q_{\mathcal{S}}^i(x)$ exists but, by setting up a simple coupling, we can get the following lower bound: $q_{\mathcal{S}}^i(x) \ge w^i(x)$, where $w^M(0)=3$, $w^M(-1)=w^M(1)=1$, and for each $2 \le i \le M$ and $1 \le j \le M$
\begin{eqnarray*}
w^{i-1}(x_{i-1}) &=& w^i(x_{i-1}) + 2 \\
w^{i-1}(x_{i-1} \pm j) &=& \frac {w^i(x_{i-1} \pm (j-1) ) + w^i(x_{i-1} \pm j)}{2} + \delta_{j=1},
\end{eqnarray*}
where $\delta_{j=1} = 1$ if $j=1$ and 0 otherwise. (Revisiting Figure~\ref{fig:spreading} might be helpful to see this coupling.) Alternatively, one can apply a slightly weaker coupling to get: $q_{\mathcal{S}}^i(x) \ge z^i(x)$, where $z^M(0)=3$, $z^M(-1)=z^M(1)=1$, and for each $2 \le i \le M$ and $1 \le j \le M$
\begin{eqnarray*}
z^{i-1}(x_{i-1}) &=& z^i(x_{i-1}) + 2 \\
z^{i-1}(x_{i-1} \pm j) &=& \frac {z^i(x_{i-1} \pm (j-1)) + z^i(x_{i-1} \pm j)}{2}.
\end{eqnarray*}
It is easy to show (by induction) that the difference between two consecutive terms, that is $|z^i(x)-z^i(x+1)|$, is always at most 4. Indeed, the property is clearly satisfied for $i=M$; for $2 \le i \le M$ and $1 \le j \le M$ we get
\begin{eqnarray*}
|z^{i-1}(x_{i-1}) - z^{i-1}(x_{i-1} \pm 1) | &=& \left| z^i(x_{i-1}) + 2 - \frac {z^i(x_{i-1}) + z^i(x_{i-1} \pm 1)}{2} \right| \\
&\le& 2 + \frac {| z^i(x_{i-1}) - z^i(x_{i-1} \pm 1) |}{2} \le 4, 
\end{eqnarray*}
and
\begin{align*}
|z^{i-1}(&x_{i-1} \pm j) - z^{i-1}(x_{i-1} \pm (j+1)) | \\
&= \left| \frac {z^i(x_{i-1} \pm (j-1)) + z^i(x_{i-1} \pm j )}{2} - \frac {z^i(x_{i-1} \pm j) + z^i(x_{i-1} \pm (j+1))}{2} \right| \\
&= \left| \frac {z^i(x_{i-1} \pm (j-1)) - z^i(x_{i-1} \pm j )}{2} + \frac {z^i(x_{i-1} \pm j) - z^i(x_{i-1} \pm (j+1))}{2} \right| \\
&\le  \frac {\left| z^i(x_{i-1} \pm (j-1)) - z^i(x_{i-1} \pm j ) \right|}{2}  + \frac {\left| z^i(x_{i-1} \pm j) - z^i(x_{i-1} \pm (j+1))\right|}{2} \le 4.
\end{align*}
In particular, it follows that the sequence $(z^i(x_{i}))_{i=M}^1$ goes up by 2 if the survivor ``pauses'' (with respect to this projection; that is, when $x_i = x_{i+1}$) and goes down by at most 2 if he ``moves'' (that is, when $x_i \neq x_{i+1}$). Hence, if a strategy $\mathcal{S}$ has the property that at some point $t \le M/2$ there were $s$ more rounds when the survivor pauses than moves, then $t(\mathcal{S}) = \Omega(s^2)$. Since, without loss of generality, we may assume that the survivor pauses at for at least $M/2$ rounds (by projecting onto $y$ instead of $x$ if needed), this ``almost'' imply some nontrivial bound for $t_n$. For a while, we were hoping to be able to show that there exists an $\eps > 0$ such that $z^1(x_1) \ge \eps n$ which would imply $t_n = \Theta(n^2)$ and this in turn would imply $z(T_n) = O(n)$. Unfortunately, it is not true. Based on simulations, we were able to identify that strategy (d) might create a problem and serve as a counterexample which turned out to be the case. We will show in the next section that the conjecture was too optimistic and, in fact, $t_n = O(n^{3/2})$. But there is still a possibility that $t_n = \Theta(n^{3/2})$ and so perhaps $z(T_n) = O(n^{3/2})$.

\subsection{$t(\mathcal{S}) = O(n^{3/2})$ for strategy (d).} 

Consider strategy (d) discussed above; that is, suppose that the survivor starts at vertex $(\lfloor M/2 \rfloor, \lceil M/2 \rceil)$, where $M = \lfloor n/4 \rfloor$, and goes to $(0,0)$ along the diagonal (that is, moves ``South'' and then immediately ``West'' in each pair of the two consecutive rounds). We will show the following result:

\begin{theorem}\label{thm:strategy_d}
$t(\mathcal{S}) = O(n^{3/2})$ for strategy (d).
\end{theorem}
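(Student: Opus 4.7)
The plan is to bound $p_{\mathcal{S}}(x,y)$ for each zombie starting position by analyzing the biased random walk of the zombie relative to the survivor. Let $\Delta_t = Z_t - S_t$ denote this relative position, and introduce the rotated coordinates $u_t = \Delta x_t + \Delta y_t$ and $v_t = \Delta x_t - \Delta y_t$. For strategy (d), a direct case analysis on the quadrant of $\Delta_t$ shows the following. In the SW quadrant ($\Delta x_t < 0$ and $\Delta y_t < 0$), the coordinate $u$ increases by exactly $+2$ per round, while $v$ performs a centered lazy random walk with variance of order $1$ per round. In the NW and SE quadrants, $v$ changes by a deterministic $\pm 2$ each $-x$ (resp.\ $-y$) round and is unchanged in the other, while $u$ performs a random walk with drift $+1$ per round. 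In NE, $u$ is exactly conserved.

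In particular, $u$ is monotone non-decreasing throughout the walk, so catching can only occur at the first round $t^*$ when $u_{t^*}$ first reaches $0$, and then only if $v_{t^*} = 0$ simultaneously. For a zombie starting in SW at $\Delta_0 = (-a,-b)$, the deterministic value $t^* = (a+b)/2$ is known, and the local CLT for the random walk $v$ gives a catching probability of order $\frac{1}{\sqrt{a+b}} \exp\bigl(-C(a-b)^2/(a+b)\bigr)$ for some absolute constant $C$. For zombies starting in NW (respectively SE), a two-phase mechanism is possible: $v$ first reaches $0$ at the deterministic time $t_v = a+b$, after which, if $u_{t_v} < 0$, the zombie enters SW with $\Delta$ on the diagonal and has an additional opportunity to catch during a subsequent SW phase, whose length is random.

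Summing $p_{\mathcal{S}}(x,y)$ over all starting positions gives the target $O(n^{3/2})$ bound. The contribution from zombies starting in SW telescopes to $O(n)$ since, along each anti-diagonal $x + y = M - c$, the catching probabilities sum to $O(1)$ (each anti-diagonal's contribution being essentially the total mass of a single binomial distribution) and there are $O(M)$ such anti-diagonals. The more delicate NW and SE contributions, dominated by zombies lying in an $O(\sqrt{n})$-wide tube around the survivor's diagonal trajectory where two-phase catching has non-trivial probability, account for the additional mass needed to reach $O(n^{3/2})$. The main obstacle is carefully handling this two-phase catching: one must jointly track (i) the probability of entering the SW phase at a suitable moment with $u<0$, computed via a binomial deviation bound on the NW phase, and (ii) the probability that $v$ subsequently returns to $0$ at the deterministic SW catching time, which is of order $1/\sqrt{k}$ where $k$ is the length of the SW phase. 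Convolving the corresponding local-CLT-type kernels and summing over all possible entry values of $k$ is the technically involved part of the argument.
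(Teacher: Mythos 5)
Your rotated-coordinate bookkeeping is accurate as far as the interiors of the quadrants go ($u$ increases by exactly $2$ per round in SW, is conserved in NE, drifts in NW/SE, and is globally non-decreasing), but the proof breaks at its central step: the claimed catching probability $\frac{1}{\sqrt{a+b}}\exp\left(-C(a-b)^2/(a+b)\right)$ for a zombie starting in SW. That formula treats $v$ as a \emph{free} random walk that must equal $0$ at the deterministic time $t^*=(a+b)/2$, which implicitly assumes the zombie stays in the open SW quadrant until time $t^*$. Whenever $|a-b|\gg\sqrt{a+b}$, this is exactly what does \emph{not} happen: $v$ hits the shrinking cone boundary $|v|=|u|$ (the zombie becomes exactly South, or exactly West, of the survivor) long before $t^*$, and on that ray the zombie's moves are forced, so $v$ stops being a free walk and gets dragged along with the boundary. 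Equivalently, $|\Delta x|$ becomes a lazy random walk \emph{reflected} at $0$ --- precisely the process $Z_t$ in the paper's Region~\textbf{R3} analysis. The reflection changes the answer by an exponential factor: a zombie starting (essentially) directly below the survivor at distance $d$ catches with probability $\Theta(1/\sqrt{d})$, by the paper's lazy-walk computation (and one can check small cases by hand: $d=2$ gives probability at least $1/2$), whereas your formula predicts $e^{-\Omega(d)}$. Since you are proving an \emph{upper} bound on $t(\mathcal{S})=\sum_{x,y}p_{\mathcal{S}}(x,y)$, you are not free to underestimate these probabilities, and the error is not localized: with the correct values, an anti-diagonal $a+b=d$ contributes $\sum_{k\le d}\min(1,k^{-1/2})=\Theta(\sqrt{d})$, not $O(1)$, so the SW region contributes $\Theta(n^{3/2})$ rather than your claimed $O(n)$. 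The SW starters are in fact the \emph{dominant} source of mass (this is what the paper's Region~\textbf{R3} is about), not a lower-order term.

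The rest of the proposal does not repair this. The place where the reflected-walk mechanism would have to reappear in your scheme --- the ``two-phase'' NW/SE analysis with its convolution of kernels --- is exactly the part you defer as technically involved and never carry out; moreover, NW/SE starters far from the coordinate rays have exponentially small catching probability (to catch, they must first diffuse onto the ray $\Delta y=0$, resp.\ $\Delta x=0$, before drifting onto the ray from which escape is permanent; this is essentially the content of the paper's Region~\textbf{R1}), so attributing the $n^{3/2}$ mass to them misplaces it. A corrected proof along your lines would have to do what the paper does: show that a SW starter with $k=|a-b|\ge c\sqrt{n}$ becomes aligned with the survivor's column (or row) at distance $\approx k$, up to an error event of probability $e^{-\Omega(k^2/n)}$; bound its catching probability by the return probability $O(1/\sqrt{k})$ of the reflected lazy walk; sum $O(n)\sum_k\left(e^{-\Omega(k^2/n)}+k^{-1/2}\right)=O(n^{3/2})$; and bound the $O(n^{3/2})$ starting vertices with $|a-b|<c\sqrt{n}$ (the tube behind the survivor's diagonal trajectory) trivially by $1$.
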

\begin{proof}
In order to estimate $t(\mathcal{S})$, we need to estimate $p_{\mathcal{S}}(x,y)$ for each $x,y\in \Z$ such that $|x|+|y| \le M$. Due to the symmetry, we may assume that $x \ge y$. Suppose that a given zombie starts the game at vertex $(x,y)$. We will partition the part of the grid we investigate into 4 regions and deal with each of them separately. See Figure~\ref{fig:strategy_d}. Let $c \in \R_+$ be some fixed, large enough, constant.

\begin{figure}
\begin{center}
\includegraphics[width=0.4\textwidth]{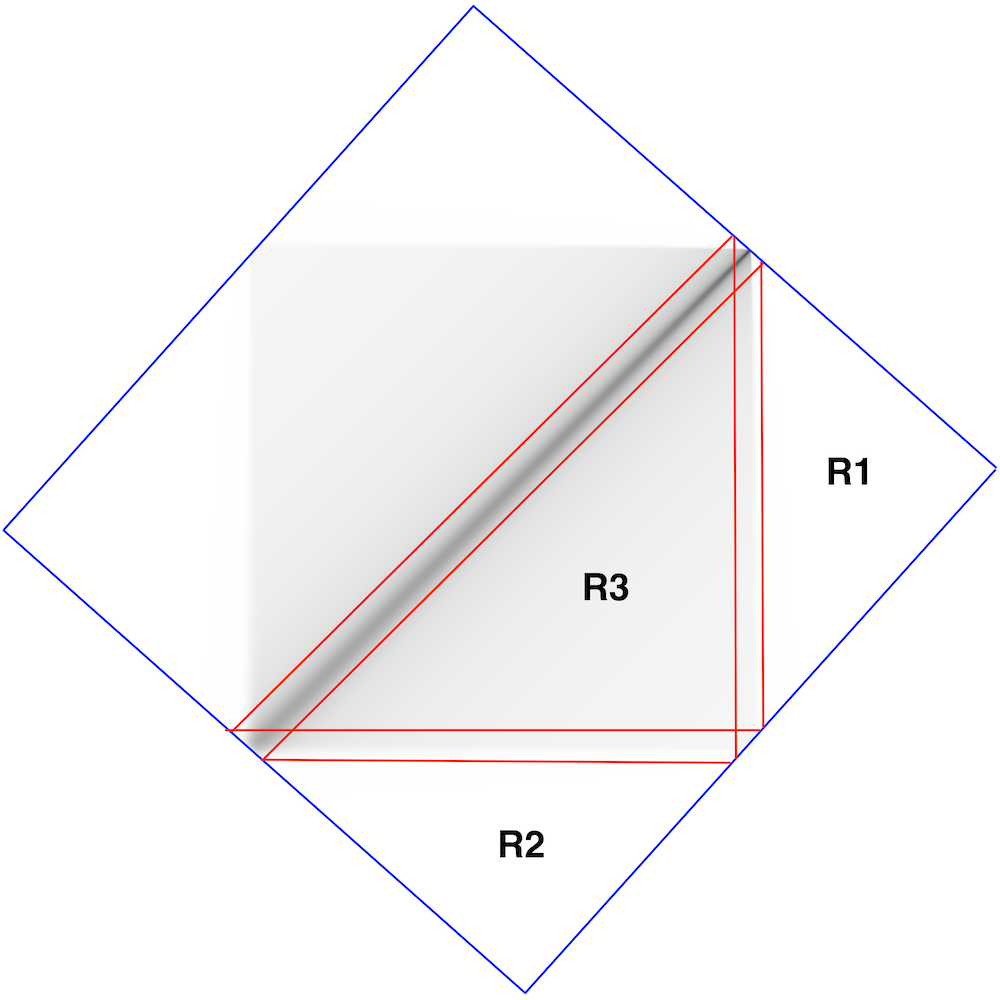}
\end{center}
\caption{Strategy (d) under microscope}
\label{fig:strategy_d}
\end{figure}

\medskip Region \textbf{R1}: Suppose that $x \ge M/2 + c \sqrt{n}$. The zombie moves randomly ``North'' or ``West'' until her position and a position of the survivor match horizontally or vertically. If they match horizontally, she will never be able to catch the survivor---the distance will be preserved till the end of the game. On the other hand, if coordinates are matched vertically, then there is a chance. The probability of this event can be estimated (see discussion for Region~\textbf{R3} below) but we do not need it here (we may use a trivial upper bound of 1 for this conditional probability). It is enough to notice that if the vertical match occurs, then at some point of the game a zombie moved $x-\lfloor M/2 \rfloor$ more often ``West'' than she moved ``North''. Hence, 
$$
p_{\mathcal{S}}(x,y) \le \Prob \Big( S_t \ge t/2 + (x-\lfloor M/2 \rfloor) \text{ for some } t \le M \Big), 
$$
where $X_1, X_2, \ldots$ is a sequence of independent random variables, each of them being the Bernoulli random variable with parameter $p=1/2$, and $S_t = \sum_{i=1}^t X_t$. It follows that
$$
p_{\mathcal{S}}(x,y) \le \exp \left( - \Omega \left( \frac { (x-\lfloor M/2 \rfloor)^2 }{M} \right) \right). 
$$
(For example by converting $S_t$ into a martingale and using Hoeffding-Azuma inequality.) The contribution to $t(\mathcal{S})$ from Region~\textbf{R1} is then at most 
$$
\sum_{x \ge M/2 + c \sqrt{n}} O(n) \exp \left( - \Omega \left( \frac { (x-\lfloor M/2 \rfloor)^2 }{M} \right) \right) =
O(n) \sum_{x \ge c \sqrt{n}}  \exp \left( - \Omega \left( \frac { x^2 }{n} \right) \right) = O(n^{3/2}),
$$
provided that $c$ is large enough. 

\medskip Region \textbf{R2}: Suppose now that $y \le -M/2 - c \sqrt{n}$. The zombie moves ``North'' or ``East'', and it is expected that players match horizontally after $r$ rounds at which point the zombie occupies vertex $(\hat{x}, \hat{y})$, where
\begin{eqnarray*}
\hat{x} &=& \frac {M/2+x}{2} + O(1) = \frac {M}{4} + \frac {x}{2} + O(1)\\
r &=& 2 ( \hat{x} - x)  + O(1) = 2 \left( \frac {M}{4} - \frac {x}{2} \right)  + O(1)\\
\hat{y} &=& y+ \frac {r}{2} = y + \left( \frac {M}{4} - \frac {x}{2} \right)  + O(1) = \frac {M}{4} - \frac {x}{2} + y + O(1).
\end{eqnarray*}
The distance from $(\hat{x}, \hat{y})$ to $(0,0)$ is $d = \hat{x} - \hat{y} = x - y + O(1)$ and so $r+d = M/2 - y + O(1) \ge M + c \sqrt{n} + O(1)$. It follows that in order for the zombie to have a chance to win, horizontal match has to occur much later than expected. Arguing as before, we can estimate the probability of this event and show that the contribution to $t(\mathcal{S})$ from Region~\textbf{R2} is $O(n^{3/2})$, provided that $c$ is large enough. 

\medskip Region \textbf{R3}: Suppose now that $x \ge y + c \sqrt{n}$, $y \ge -M/2 + c \sqrt{n}$, and $x \le M/2 - c \sqrt{n}$. This case seems to be the most interesting. As for the previous region, it is expected that players match horizontally when the zombie occupies vertex $(\hat{x}, \hat{y})$ and the distance between players is
$$
k = \left( \frac M2 - y \right) - \left( \frac M2 - x \right) = x-y \ge c \sqrt{n}. 
$$
Arguing as before, we can show that with probability $1-\exp(-\Omega(-k^2/M))$ not only this happens but at that time the distance between players, $Y$, is at least $k/2$. 

Conditioning on $Y = y \ge k/2$, we aim now to estimate the probability that the survivor is eaten. Assume that $y$ is even; the odd case can be dealt similarly. Consider a sequence of two consecutive rounds. At the beginning of each pair of rounds, before the survivor goes ``South'', we measure the absolute difference between the corresponding $x$-coordinates of the players, to get a sequence $Z_0, Z_1, \ldots, Z_{y/2}$ of random variables. Clearly, $Z_0=0$ and the survivor is eaten if and only if $Z_{y/2}=0$. Indeed, if $Z_{y/2} > 0$, then the zombie ends up lined up horizontally before getting close to the survivor and from that point on she will continue keeping the distance. If $Z_t > 0$, then, 
$$
Z_{t+1} =
\begin{cases}
Z_t + 1 & \text{ with probability $1/4$ (zombie goes ``North'' twice)} \\
Z_t - 1 & \text{ with probability $1/4$ (zombie goes ``West'' twice)} \\
Z_t & \text{ with probability $1/2$ (zombie goes once ``West'' and once ``North'')}.
\end{cases}
$$
On the other hand, if $Z_t = 0$, then the first move of the zombie is forced (she goes ``North'') and so 
$$
Z_{t+1} =
\begin{cases}
1 & \text{ with probability $1/2$ (the second move is ``North'')} \\
0 & \text{ with probability $1/2$ (the second move is ``West'')}.
\end{cases}
$$
We can couple this process with a lazy random walk and one can show that $\Prob(Z_{y/2} = 0) = \Theta(1/\sqrt{y}) = \Theta(1/\sqrt{k})$. The contribution to $t(\mathcal{S})$ from Region~\textbf{R3} is 
$$
O(n) \sum_{k = c \sqrt{n}}^{O(n)} \left( \exp \left( - \Omega \left( \frac { k^2 }{n} \right) \right) + \frac {1}{\sqrt{k}} \right) = O(n^{3/2}) + O(n) \int_{\sqrt{n}}^{O(n)} \frac {dx}{x} = O(n^{3/2}),
$$
provided that $c$ is large enough. 

\medskip The number of vertices not included in the three Regions we considered is $O(n^{3/2})$ and so the total contribution is $O(n^{3/2})$ and the proof is finished.
\end{proof}

\subsection{Potential improvement on the lower bound: $z(T_n) = \sqrt{n}/\omega$, where $\omega = \omega(n)$ is any function going to infinity as $n \to \infty$.}

Let $\omega = \omega(n)$ be any function going to infinity as $n \to \infty$. Suppose that the survivor uses strategy (d) (regardless what zombies are doing). This time, he continues moving ``diagonally'' forever (of course, unless he is eaten earlier). In the previous section, in order to avoid problems with a boundary effect, we restricted ourselves to sub-graph around $(0,0)$. However, it is straightforward to extend the argument and show that if the survivor is not eaten for long enough, all the zombies will stay behind him keeping their distances forever. We do not do it here as the improvement would be minor anyway. One can show that, a.a.s.\ $k = \sqrt{n} / \omega$ zombies cannot catch the survivor on $T_n$ and so $z(T_n) > \sqrt{n} / \omega$. Indeed, after extending the argument, the probability that no zombie catches the survivor would be
$$
\left( 1 - \frac {t(\mathcal{S})}{n^2} \right)^k = \exp \left( - O \left( \frac {k}{\sqrt{n}} \right) \right) = \exp ( -o(1)) \sim 1.
$$ 
This would only be a small improvement comparing to the lower bound in~\cite{zombies} and so we are not formalize the argument here. Is there a lower bound of $n^{1/2 + \eps}$ for some $\eps>0$? This remains an open question.

\end{document}